\newcommand{\U}[1]{\mathop{\mathrm{U}(#1)}}
\newcommand{\SO}[1]{\mathop{\mathrm{SO}(#1)}}
\newcommand{\Or}[1]{\mathop{\mathrm{O}(#1)}}
\newcommand{\GLR}[1]{\mathop{\mathrm{GL}(#1,\mathbb R)}}
\newcommand{\GLC}[1]{\mathop{\mathrm{GL}(#1,\mathbb C)}}
\newcommand{\GL}[1]{\mathop{\mathrm{GL}(#1)}}
\newcommand{\Gr}[3]{\mathop{\mathrm{Gr}_{#1}({#2}^{#3})}}
\newcommand{\Tau}[3]{\mathop{\mathrm{E}_{#1}({#2}^{#3})}}
\newcommand{\St}[3]{\mathop{\mathrm{V}_{#1}({#2}^{#3})}}
\newcommand{\GrR}[2]{\Gr{#1}{\RR}{#2}}
\newcommand{\TauR}[2]{\Tau{#1}{\RR}{#2}}
\newcommand{\GrC}[2]{\Gr{#1}{\CC}{#2}}
\newcommand{\TauC}[2]{\Tau{#1}{\CC}{#2}}
\newcommand{\StR}[2]{\St{#1}{\RR}{#2}}
\newcommand{\BG}[1]{\mathop{\mathrm{B}{#1}}}
\newcommand{\EG}[1]{\mathop{\mathrm{E}{#1}}}
\newcommand{\BGLR}[1]{\GrR{#1}{\infty}}
\newcommand{\EGLR}[1]{\TauR{#1}{\infty}}
\newcommand{\BGLC}[1]{\GrC{#1}{\infty}}
\newcommand{\EGLC}[1]{\TauC{#1}{\infty}}
\newcommand{\EO}[1]{\StR{#1}{\infty}}
\newcommand{\BGSO}[1]{\mathop{\widetilde{\mathrm{Gr}}_{#1}({\RR}^{\infty})}}
\newcommand{\VectR}[1]{\mathop{\mathrm{Vect}^{#1}_{\RR}}}
\newcommand{\VectC}[1]{\mathop{\mathrm{Vect}^{#1}_{\CC}}}
\newcommand{\rank}[1]{\mathop{\mathrm{rank}}(#1)}
\newcommand{\ug}{\;\shortstack{{\tiny\upshape def}\\=}\;}
\newcommand{\Id}{\mathop{\mathrm{Id}}}
\newcommand{\End}[1]{\mathop{\mathrm{End}(#1)}}
\newcommand{\ch}[1]{\mathop{\mathrm{ch}(#1)}}
\newcommand{\CC}{\mathbb{C}}
\newcommand{\HH}{\mathbb{H}}
\newcommand{\RR}{\mathbb{R}}
\newcommand{\ZZ}{\mathbb{Z}}
\newcommand{\NN}{\mathbb{N}}
\newcommand{\QQ}{\mathbb{Q}}
\newcommand{\OO}{\mathbb{O}}
\newcommand{\trivial}[1]{\varepsilon^{#1}}
\newcommand{\CP}[1]{\mathbb{C}P^{#1}}
\newcommand{\st}{\text{ such that }}
\renewcommand{\Im}{\mathop{\mathrm{Im}}}
\renewcommand{\Re}{\mathop{\mathrm{Re}}}
\numberwithin{equation}{section}
\newtheorem{thm}{Theorem}[section]
\newtheorem*{thm*}{Theorem}
\newtheorem{prp}[thm]{Proposition}
\newtheorem{lem}[thm]{Lemma}
\theoremstyle{definition}
\newtheorem{dfn}[thm]{Definition}
\theoremstyle{remark}
\newtheorem{rem}[thm]{Remark}
\title{Almost complex structures on spheres}
\date{\today}
\subjclass[2010]{Primary 53C15; Secondary 01-02, 55R40, 55R50, 57R20, 19L99}
\keywords{Almost Complex Structures on Spheres, Classifying Space, Characteristic Classes}
\thanks{The second author were supported by the GNSAGA group of INdAM and by the MIUR under the PRIN Project ``Variet\`a reali e complesse: geometria, topologia e analisi armonica''}
\author{Panagiotis Konstantis}
\address{Philipps--Universit\"at Marburg, Fachbereich für Mathematik und Informatik, Hans-Meerwein-Stra\ss e, 35032 Marburg}
\email{pako@mathematik.uni-marburg.de}
\author{Maurizio Parton}
\address{Universit\`a di Chieti-Pescara\\ Dipartimento di Economia, viale della Pineta 4, I-65129 Pescara, Italy}
\email{parton@unich.it}
\begin{document}

\begin{abstract}
In this paper we review the well-known fact that the only spheres admitting an almost complex structure are $S^2$ and $S^6$. The proof described here uses characteristic classes and the Bott periodicity theorem in topological $K$-theory. This paper originates from the talk ``Almost Complex Structures on Spheres'' given by the second author at the MAM1 workshop ``(Non)-existence of complex structures on $S^6$'', held in Marburg from March 27th to March 30th, 2017. It is a review paper, and as such no result is intended to be original. We tried to produce a clear, motivated and as much as possible self-contained exposition.
\end{abstract}

\maketitle

\section{Introduction}

A \emph{complex manifold} is a differentiable manifold $M$ whose transition functions are holomorphic. If $x+iy$ denotes local coordinates around $p\in M$, the multiplication by $i$ induces an endomorphism $J_p$ of the tangent space $T_p(M)$ given by
\begin{equation}
\label{eq:cpx_mfd}
\left\{
\begin{aligned}
\frac{\partial}{\partial_x}|_p&\longmapsto\frac{\partial}{\partial_y}|_p\\
\frac{\partial}{\partial_y}|_p&\longmapsto-\frac{\partial}{\partial_x}|_p
\end{aligned}
\right.
\end{equation}
Writing the Cauchy-Riemann equations on the intersection of two overlapping charts, one obtains that $J_p$ is globally defined. The endomorphism $J:T(M)\rightarrow T(M)$, locally defined by \eqref{eq:cpx_mfd}, is an example of almost complex structure. More generally, an \emph{almost complex structure} on a real differentiable manifold $M$ is a linear bundle morphism $J$ of the tangent vector bundle $T(M)$ satisfying $J^2=-\Id$. The pair $(M,J)$ is called an \emph{almost complex manifold}, and whenever $J$ arises from \eqref{eq:cpx_mfd} for certain holomorphic coordinates $x+iy$, one says that the almost complex structure $J$ is \emph{integrable}.

One often refers to the above discussion by the sentence: ``on a complex manifold the multiplication by $i$ gives a compatible integrable almost complex structure''.

The problem of deciding if a given almost complex structure $J$ is integrable is nowadays an easy task, due to the celebrated Newlander-Nirenberg Theorem \cite{NeNCAC}. Much more challenging is the problem of deciding whether a differentiable manifold admits \emph{any} integrable almost complex structure: in real dimension $>4$, the only known general obstruction is the trivial one, that is, the existence of an almost complex structure.

The existence of an almost complex structure $J$ on a manifold $M$ is possible only if the dimension $m$ of $M$ is even, because of $(\det J)^2=(-1)^m$. Moreover, $J$ is equivalent to a $\GLC{m/2}$-structure on $T(M)$. In fact, any endomorphism $J_p$ on the tangent space $T_p(M)$ such that $J_p^2=-\Id$ admits an \emph{adapted} basis $\{v,J(v)\}$, and with respect to this basis
$J_p=
\begin{pmatrix}
0,-1\\
+1,0
\end{pmatrix}$. Thus, all the $J_p$ stick together giving an almost complex structure $J$ if and only if the structure group $\GLR{2n}$ can be reduced to $\GLC{n}$. See \cite[Remark 2 at page 8]{BrySRG} for a nice discussion regarding the terminology ``almost'' and ``integrable'' for $G$-structures.

Very clear references for basic facts on almost complex structures are the classical Kobayashi-Nomizu \cite[IX.1 and IX.2]{KoNFD2} and the Besse \cite[2.A]{BesEiM}.

This paper originates from the talk ``Almost Complex Structures on Spheres'' given by the second author at the \href{http://www.mathematik.uni-marburg.de/~agricola/Hopf2017/}{MAM1 workshop ``(Non)-existence of complex structures on $S^6$''}, held in Marburg from March 27th to March 30th, 2017. The plan of this beautiful workshop was to discuss and summarize various results of the past 30 years about the existence or non-existence of integrable almost complex structures on spheres, and this talk was about the classical result of Borel and Serre ruling out all spheres besides $S^2$ and $S^6$, namely, the result from where the story began:
\begin{thm}[Borel and Serre, 1953 \cite{BoSGLP}]\label{thm:main}
The sphere $S^n$ admits an almost complex structure if and only if $n=2$ or $n=6$.
\end{thm}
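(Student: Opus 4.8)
The plan is to treat the two directions separately, dispatching ``if'' quickly and spending the main effort on ``only if''. As the excerpt notes, $(\det J)^2 = (-1)^n$ forces $n$ to be even, so I write $n = 2m$ and record that $\chi(S^{2m}) = 2$. For existence, $S^2 = \CP{1}$ is a complex manifold and hence carries an integrable almost complex structure; on $S^6 \subset \Im\OO$ one sets $J_x(v) = x\cdot v$ using octonion multiplication on the imaginary octonions, and the alternativity of $\OO$ together with $x^2 = -1$ for a unit imaginary $x$ gives $J_x^2 = -\Id$, while a short check shows $J_x$ preserves $T_x S^6 = x^\perp$. This handles the two surviving cases.

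For non-existence the key observation is that an almost complex structure $J$ makes $T(S^{2m})$ into a complex vector bundle $E$ of rank $m$ whose underlying real bundle is $T(S^{2m})$, so that the top Chern class satisfies $\langle c_m(E), [S^{2m}]\rangle = \chi(S^{2m}) = 2$, since $c_m$ of a rank-$m$ complex bundle is the Euler class of its realification. The plan is to combine this with a $K$-theoretic divisibility. Because the cohomology of $S^{2m}$ lives only in degrees $0$ and $2m$, every Chern class in intermediate degree vanishes, so Newton's identity collapses to $\mathrm{ch}_m(E) = \frac{(-1)^{m-1}}{(m-1)!}\,c_m(E)$ as the unique surviving component of the Chern character. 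Bott periodicity identifies $\widetilde{K}(S^{2m})$ with $\ZZ$, and the Chern character sends a generator to the integral generator of $H^{2m}(S^{2m};\ZZ)$; hence $\mathrm{ch}_m(E)$ is an integral class, giving $(m-1)! \mid \langle c_m(E),[S^{2m}]\rangle$. Applied to $E$ this yields $(m-1)! \mid 2$, so $m \le 3$ and $n \le 6$.

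This rules out every sphere of dimension $\ge 8$ but leaves $S^4$, where $(m-1)! = 1 \mid 2$ is vacuous, so I would dispatch it by a separate characteristic-class argument. Since $T(S^{2m}) \oplus \trivial{1} \cong \trivial{2m+1}$, the tangent bundle is stably trivial and all its Pontryagin classes vanish, in particular $p_1(T(S^4)) = 0$. If $T(S^4)$ admitted a complex structure $E$ of rank $2$, then $c_1(E) = 0$ because $H^2(S^4) = 0$, so $p_1(T(S^4)) = c_1(E)^2 - 2c_2(E) = -2c_2(E)$; combined with $p_1 = 0$ this forces $c_2(E) = 0$, contradicting $\langle c_2(E), [S^4]\rangle = \chi(S^4) = 2$.

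The main obstacle I expect is the divisibility claim, i.e.\ pinning down the top Chern class of the Bott generator of $\widetilde{K}(S^{2m})$. The clean route is to realize this generator as the $m$-fold external product of the reduced Bott class $[H] - 1 \in \widetilde{K}(S^2)$, whose Chern character is the generator of $H^2(S^2;\ZZ)$; multiplicativity of the Chern character then shows the generator of $\widetilde{K}(S^{2m})$ has top Chern character equal to the generator of $H^{2m}(S^{2m};\ZZ)$, and inverting $\mathrm{ch}_m = \frac{(-1)^{m-1}}{(m-1)!}\,c_m$ gives $c_m(\text{generator}) = (-1)^{m-1}(m-1)!$. Everything else is routine bookkeeping with characteristic classes, so this integrality statement, resting squarely on Bott periodicity, is the crux of the argument.
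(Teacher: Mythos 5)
Your proposal is correct and follows essentially the same route as the paper: existence on $S^2$ and $S^6$ via multiplication in $\Im\OO$, integrality of the Chern character on $S^{2m}$ via Bott periodicity (your $m$-fold external power of the reduced Bott class is just a reduced-$K$-theory repackaging of the paper's degree-one map $(S^2)^{\times m}\to S^{2m}$ with K\"unneth), the collapse of Newton's identity to $\frac{(-1)^{m-1}}{(m-1)!}c_m(E)$, and the conclusion $(m-1)!\mid\chi(S^{2m})=2$. You also correctly noticed that this leaves $S^4$ untouched and dispatch it exactly as the paper's Lemma~\ref{lem:eulpon} does in the case $k=1$: your identity $p_1(T(S^4))=-2c_2(E)$ is the paper's $(-1)^kp_k(T(S^{4k}))=2e(T(S^{4k}))$, contradicted by stable triviality of $T(S^4)$ together with $\langle c_2(E),[S^4]\rangle=\chi(S^4)=2$.
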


The material in this paper comes from several sources, clearly stated whenever necessary. As such, no result is intended to be original. However, we tried to produce a clear, motivated and as much as possible self-contained exposition.




\section{Almost complex structures on $S^2$ and $S^6$}

An almost complex structure on $S^2$ can be constructed in several ways. One can observe that since $S^2$ is orientable, its structure group $\GLR{2}$ can be reduced to $\SO{2}=\U{1}\subset\GLC{1}$. Or, one can observe that $S^2$ is diffeomorphic to $\CP{1}$.

An explicit construction of $J$ is possible by means of a vector product in $R^3$. The feasibility of this approach, that we are now going to describe, is that the same construction leads to an almost complex structure on $S^6$.

Denote by $\HH$ and $\OO$ the quaternions and the octonions, respectively. We will denote the product of $u,v\in\HH$ or $u,v\in\OO$ by juxtaposition $uv$, and the conjugation by $u^*$. Note then that $(uv)^*=v^*u^*$, and $uu^*=|u|^2$, where $|\cdot|$ is the standard Euclidean norm. By polarization, we get $2\langle u,v\rangle=uv^*+v^*u$, which implies the useful identity $uv^*=-v^*u$ whenever $u\perp v$. On the octonions, we have a non-trivial \emph{associator}, that is, $[u,v,w]\ug((uv)w)-(u(vw))$ is not identically zero if $u,v,w\in\OO$. A key property of this associator is that it is \emph{alternating}, that is, $[u,v,w]=0$ whenever two amongst $u,v,w$ are equal.

Look at the spheres $S^2$ and $S^6$ as unit spheres in $\Im\HH$, $\Im\OO$, respectively:
\[
S^2=\{x\in\Im\HH\st |x|^2=1\},\qquad S^6=\{x\in\Im\OO\st |x|^2=1\}
\]

Let $V$ denote either $\Im\HH$ or $\Im\OO$. A cross product $u\times v$ can be defined on $V$ by:
\[
u\times v\ug\Im(uv)=\frac{1}{2}(uv-vu)\qquad u,v\in V
\]

\begin{prp}
Let $p\in S^n\subset V$, $n=2$ or $n=6$. Define $J\in\End{T(S^n)}$ by $J_p(v)\ug b\times v$, where $v\in T_p(S^n)\subset V$. Then $J$ is an almost complex structure on $S^n$, that is, $J_p^2=-\Id$ for any $p\in S^n$.
\end{prp}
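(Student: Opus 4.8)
The plan is to reduce the whole statement to two elementary facts about a unit imaginary element $p\in V$ together with the alternating property of the associator. First I would pin down the tangent space: since $S^n$ is the unit sphere of the real inner product space $V$, one has $T_p(S^n)=\{v\in V:\langle v,p\rangle=0\}$, and every such $v$ is automatically purely imaginary. As $p$ is imaginary of unit length, $p^*=-p$ and $pp^*=|p|^2=1$, whence $p^2=-1$. These are the only structural inputs I expect to need.

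Next I would show that on tangent vectors the cross product collapses to the algebra product. If $v\in T_p(S^n)$ then $v\perp p$, so the identity $uv^*=-v^*u$ for orthogonal elements, together with $v^*=-v$, gives the anticommutation relation $pv=-vp$; hence
\[
J_p(v)=p\times v=\tfrac12(pv-vp)=pv .
\]
I would also record that $J_p(v)$ indeed lies in $T_p(S^n)$: using that multiplication by $p$ scales norms and $|p|=1$, one gets $\langle pv,p\rangle=\langle pv,p\cdot 1\rangle=|p|^2\langle v,1\rangle=\Re(v)=0$, so $J$ is a genuine bundle endomorphism of $T(S^n)$.

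Finally I would square. Feeding $J_p(v)=pv$ into itself,
\[
J_p^2(v)=p\times(p\times v)=\Im\bigl(p(pv)\bigr),
\]
so everything hinges on replacing $p(pv)$ by $p^2v$. For $S^2$ this is free because $\HH$ is associative; for $S^6$ it is precisely the point where non-associativity of $\OO$ threatens the argument, and this is the one real obstacle. It is removed by the alternating associator: $[p,p,v]=p^2v-p(pv)=0$, so $p(pv)=p^2v=-v$. As $-v$ is already imaginary, $\Im\bigl(p(pv)\bigr)=-v$, and therefore $J_p^2(v)=-v$, that is $J_p^2=-\Id$. The moral I would stress is that the recipe works uniformly on $S^2$ and $S^6$ because only the left alternative law $p(pv)=p^2v$ is used, and this holds in both $\HH$ and $\OO$ exactly because their associators are alternating.
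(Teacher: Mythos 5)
Your proof is correct and follows essentially the same route as the paper's: you collapse the cross product to the algebra product using the orthogonality identity $uv^*=-v^*u$, and then conclude via the alternating associator $[p,p,v]=0$ together with $p^2=-pp^*=-1$. The only differences are cosmetic --- you obtain $p\times v=pv$ from the anticommutation $pv=-vp$ rather than by showing $\Re(pv)=0$ directly, and you add the (welcome, though omitted in the paper) verification that $J_p(v)$ is again tangent at $p$.
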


\begin{proof}
First, we prove that if $u\perp v$, then $u\times v=uv$:
\[
(uv)^*=v^*u^*\stackrel{v\in\Im}{=}-vu^*\stackrel{u\perp v}{=}uv^*=-uv\implies\Re(uv)=0\implies u\times v=\Im(uv)=uv
\]
Now, since $p\perp v$, we have $J_p^2(v)=p(pv)\stackrel{[p,p,v]=0}{=}(pp)v=-(pp^*)v=-(|p|^2)v=-v$.
\end{proof}

\begin{rem}
The algebras $\HH$ and $\OO$ belong to an infinite family of algebras $A_n$ of dimension $2^n$ arising from the \emph{Cayley-Dickson process}. One could wonder whether the above construction works for any other $A_n$, apart from $\HH=A_2$ and $\OO=A_3$. The answer is no, essentially because in order to define a cross product, you need $A_n$ be alternative, and this happens only for $n$ up to 3. To deepen this part of the story, see \cite[Appendix IV.A, page 140 and forward]{HaLCaG}, \cite[Hurwitz 1898 paper, in German]{HurNGW}, \cite[Section 2.2]{BaeOct}, \cite{BrGVCP}, and the recent \cite{MclWDC}.
\end{rem}

\begin{rem}
Denote by $N_J$ the Nijenhuis tensor of $J$. Then, the almost complex structure on $S^2$ is integrable, because it is the complex structure induced by $\CP{1}$ (this can be seen geometrically, because $p\times v$ is a $\pi/2$ counterclockwise rotation of $v$ on the plane $p^\perp$, or observing that in complex dimension one $N_J$ is 0, thus $(S^2,J)$ is a compact complex curve of genus 0, the only one being $\CP{1}$), whereas the almost complex structure on $S^6$ is not integrable \cite{EcFISP}. The following statement is then true \emph{a fortiori}:
\[
N_J=0\iff [u,v,w]=0\qquad\forall u,v,w\in V
\]
This suggests a direct relation between $N_J(u,v)$ and some expression involving associators. A natural one would be $\langle N_J(u,v),w\rangle=[u,v,w]$. However, we have not been able so far to find such a relation.
\end{rem}

\section{Characteristic classes}

According to Dieudonné \cite[IV.1]{DieHAD}, characteristic classes arose in 1935 in Stiefel's dissertation \cite{StiRFN}, as a tool to investigate whether a manifold $M^n$ admits $m\le n$ linearly independent nowhere vanishing vector fields. Looking at the $(r+1)$-skeleton of $M$, after orthonormalizing, such $m$ vector fields define maps $\alpha$ from disks $D^{r+1}$ into the Stiefel manifold $\StR{m}{n}$ of orthonormal $m$-frames in $R^n$. Since this argument reduces the problem to whether these $m$ vector fields can be extended from the sphere $S^r$ to the disk $D^{r+1}$, the problem of existence can be read in terms of homotopy groups $\pi_r(\StR{m}{n})$, for $r\ge n$. Since $\pi_{r}(\StR{m}{n})=0$ for $r<n-m$, such $m$ vector fields always exist on the $(m-n)$-skeleton of $M$, and the first obstruction appears as $\alpha\in\pi_{n-m}(\StR{m}{n})$. This $\alpha$ is what was called the \emph{characteristic} of the $m$-frame by Stiefel, giving birth to the name \emph{characteristic classes}.

\subsection{Classifying spaces}

The above argument is the obstruction-theoretic definition of \emph{Stiefel-Whitney} characteristic classes of the tangent bundle of a manifold $M$. More generally, characteristic classes are associated to a (real or complex) vector bundle $E\rightarrow B$: they are defined as cohomology classes of $B$ induced by any classifying map $\phi:B\rightarrow \BG{\GL{n}}$, where $\BG{\GL{n}}=\BG{\GLR{n}}$ or $\BG{\GLC{n}}$ is the classifing space for real or complex vector bundles.

In the following, we sketch the notions of classifying maps, classifying spaces and universal bundles in the general setting of principal $G$-bundles. For a historical description going from 1935 Whitney paper \cite{WhiSpS} to 1956 Milnor beautiful construction \cite{MilCUB,MilCII}, see \cite[III.2E-2G]{DieHAD}. The main reference for this part is \cite{HatVBK}.

Let $G$ be a topological group, acting freely on a \emph{weakly contractible} topological space $\EG{G}$ (that is, with trivial homotopy groups). The corresponding principal $G$-bundle $\pi_G:\EG{G}\rightarrow\BG{G}$ is called a \emph{universal $G$-bundle}, and the base space $\BG{G}=\EG{G}/G$ a \emph{classifying space} of $G$. The reason for the name ``universal bundle'' is the following theorem.

\begin{thm*}
Any principal $G$-bundle $\pi:E\rightarrow B$ over a CW-complex $B$ is isomorphic to a pull back of a universal $G$-bundle under some continuous map $f:B\rightarrow\BG{G}$. Moreover, two pull backs are isomorphic if and only if the corresponding maps are homotopic.
\end{thm*}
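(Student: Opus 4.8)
The plan is to reduce both halves of the theorem to a single obstruction-theoretic construction that exploits the weak contractibility of $\EG{G}$. The key observation is that, for principal $G$-bundles, producing a classifying map $f\colon B\to\BG{G}$ with $E\cong f^*\EG{G}$ is equivalent to producing a $G$-equivariant map $\tilde f\colon E\to\EG{G}$. Indeed, such an $\tilde f$ descends to a unique map $f$ on the quotients $B=E/G$ and $\BG{G}=\EG{G}/G$, and the associated bundle map $E\to f^*\EG{G}$, $x\mapsto(\pi(x),\tilde f(x))$, restricts on each fibre to a $G$-equivariant map of torsors, hence to a bijection; so it is automatically a bundle isomorphism. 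Thus the entire statement becomes one about existence and uniqueness-up-to-homotopy of equivariant maps into $\EG{G}$.

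For existence, I would build $\tilde f$ by induction over the skeleta of $B$, using that the characteristic map of each $n$-cell pulls $\pi$ back to a trivial bundle $D^n\times G$. Over such a trivial piece a $G$-equivariant map is determined by an ordinary map $\phi\colon D^n\to\EG{G}$ via $(x,g)\mapsto g\cdot\phi(x)$, so extending $\tilde f$ across the cell from its given values over the boundary amounts to extending a map $S^{n-1}\to\EG{G}$ over $D^n$. This is possible exactly when the boundary map is null-homotopic, which is guaranteed by $\pi_{n-1}(\EG{G})=0$. Starting from an arbitrary choice on the $0$-cells and proceeding cell by cell yields the desired equivariant map, and hence the classifying map $f$ with $E\cong f^*\EG{G}$.

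For the ``moreover'' part, one direction is the homotopy invariance of pullbacks: homotopic maps $f_0\simeq f_1$ pull the universal bundle back to isomorphic bundles over the CW-complex $B$, a standard fact I would invoke. For the converse, suppose $f_0^*\EG{G}\cong f_1^*\EG{G}\cong E$; this furnishes two equivariant maps $\tilde f_0,\tilde f_1\colon E\to\EG{G}$, and I would construct a $G$-equivariant homotopy between them by rerunning the very same skeletal induction on $E\times I$, now extending across cells of the form $e^n\times I$ on which the map is already prescribed over $e^n\times\{0,1\}$ and $\partial e^n\times I$. Each obstruction again lands in a homotopy group of $\EG{G}$ and therefore vanishes, and passing to quotients delivers the homotopy $f_0\simeq f_1$.

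The main obstacle I anticipate is bookkeeping rather than conceptual. Making the induction rigorous requires the local trivializations of $\pi$ to be compatible with the $G$-action, and one must check that the extensions chosen over different cells glue consistently, which is where the homotopy extension property of the CW pairs $(B^{(n)},B^{(n-1)})$ enters. Once the reduction to maps into the weakly contractible space $\EG{G}$ is in place, however, every single obstruction disappears for the one reason $\pi_*(\EG{G})=0$, so existence and uniqueness are genuinely the same argument applied to $E$ and to $E\times I$ respectively.
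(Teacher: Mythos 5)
Your proposal is correct and is essentially the argument the paper sketches: a $G$-equivariant map $\tilde f\colon E\to\EG{G}$ is the same datum as a section $s$ of the associated bundle $E\times_G\EG{G}\rightarrow B$, which is precisely what the paper constructs (with $f=p\circ s$), and in both treatments every obstruction vanishes for the single reason that $\EG{G}$ is weakly contractible. Your explicit skeletal induction, and its relative version over $B\times I$ for the uniqueness half, merely fills in the obstruction-theoretic details and the ``moreover'' direction that the paper leaves implicit, so the two proofs coincide in substance.
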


The map $f$ in the theorem is called a \emph{classifying map} for the principal bundle $E\rightarrow B$. For $B$ a CW-complex, the reason why the universal bundle exists is the \emph{Brown's representability theorem}, a result in Category Theory \cite{BroCoT}. However, hypothesis can be relaxed in several directions: for instance, $B$ can be a paracompact topological space instead of a CW-complex.

The need for a universal bundle arise from the classification of fiber bundles with a fixed $G$ as structure group. After reducing to the associated principal $G$-bundle $E\rightarrow B$ one can, heuristically, look for a principal $G$-bundle $\EG{G}\rightarrow\BG{G}$ \emph{depending only on $G$} such that \emph{any principal $G$-bundle} is induced by $\EG{G}\rightarrow\BG{G}$ by means of a bundle map $\phi$:
\begin{center}
\begin{equation}\label{dgr:univ}
\begin{tikzpicture}[scale=1]
\node (A) at (0,1) {$E=\phi^*(\EG{G})$};
\node (B) at (2,1) {$\EG{G}$};
\node (C) at (0,0) {$B$};
\node (D) at (2,0) {$\BG{G}$};
\path[->]
(A) edge (B)
(C) edge node[above]{$\phi$} (D)
(A) edge (C)
(B) edge (D);
\end{tikzpicture}
\end{equation}
\end{center}

Using the left and the right action of $G$ respectively on $E$ and $\EG{G}$, and the canonical projections, one obtains the following diagram:
\begin{center}
\begin{tikzpicture}[scale=1]
\node (A) at (0,1) {$E=\phi^*(\EG{G})$};
\node (B) at (5,1) {$\EG{G}$};
\node (C) at (0,0) {$B$};
\node (D) at (5,0) {$\BG{G}$};
\node (E) at (3,1) {$E\times\EG{G}$};
\node (F) at (3,0) {$E\times_G\BG{G}$};
\path[->]
(E) edge (A)
(E) edge (B)
(E) edge (F)
(F) edge node[above]{$\pi$} (C)
(F) edge node[above]{$p$} (D)
(A) edge (C)
(B) edge (D);
\end{tikzpicture}
\end{center}
Thus, if one can find a section $s$ of $\pi$, the classifying map is $\phi=p\circ s$: but such a section $s$ exists whenever all the homotopy groups of $\EG{G}$ vanish - here is where the need for $\EG{G}$ to be weakly contractible comes from. As said before, under mild hypothesis on the base space $B$ and the group $G$, such a weakly contractible space $\EG{G}$ always exists.
Moreover, since everything behaves well with respect to homotopies, one obtains a bijective correspondence between principal bundles and homotopy classes of continuous maps:
\[
\{\text{isomorphism classes of principal $G$-bundles over $B$}\}\stackrel{1-1}{\longleftrightarrow} [B:\BG{G}]
\]

Concrete examples of classifying spaces can be given using the notion of Grassmannian and Stiefel manifold. For any $n\le k$, denote by $\GrR{n}{k}$ the set of $n$-dimensional vector subspaces of $\RR^k$, and consider the set
\[
\TauR{n}{k}\ug\{(v,l)\in\RR^k\times\GrR{n}{k}\st v\in l\}
\]
The map sending $(v,l)\in\TauR{n}{k}$ into $l\in\GrR{n}{k}$ defines a \emph{tautological} vector bundle $\TauR{n}{k}\rightarrow\GrR{n}{k}$. Because of Gram–Schmidt, its associated $\GLR{n}$-principal bundle and $\Or{n}$-principal bundle, having as fiber over $l\in\GrR{n}{k}$ respectively the isomorphisms and the isometries from $\RR^n$ to $l$, have the same homotopy type. The latter is called the \emph{Stiefel manifold} $\StR{n}{k}$ of ordered orthonormal $n$-frames in $\RR^k$. Using the fibration $\StR{n-1}{k-1}\hookrightarrow\StR{n}{k}\twoheadrightarrow S^{k-1}$, one sees that $\StR{n}{k}$ is compact, and that the first non-trivial homotopy group of the Stiefel manifold is $\pi_{k-n}(\StR{n}{k})$. Considering the sequence of inclusions $\RR\subset\RR^2\subset\RR^3\subset\dots$, one can define the direct limit versions of the above: the tautological bundle $\EGLR{n}\rightarrow\BGLR{n}$ over the infinite Grassmannian and the associated infinite Stiefel manifold $\EO{n}$. Note that $\EO{n}$ is no more compact, but is weakly contractible.

Consider the tangent bundle $T(S^n)\rightarrow S^n$ of the sphere $S^n\subset\RR^{n+1}$. For any $x\in S^n$, we have $T_x(S^n)\subset\RR^{n+1}$, and this gives a map $\phi:S^n\rightarrow\GrR{n}{n+1}$. The fiber on $x\in S^n$ are the vectors that are tangent to the sphere in $x$. Read in $\RR^{n+1}$, these vectors are the elements of $\phi(x)$, that is, the fiber over $x\in S^n$ is ``tautologically'' the fiber of $\TauR{n}{n+1}\rightarrow\GrR{n}{n+1}$. In this way, diagram \eqref{dgr:univ} becomes
\begin{center}
\begin{tikzpicture}[scale=1]
\node (A) at (0,1) {$T(S^n)$};
\node (B) at (3,1) {$\TauR{n}{n+1}$};
\node (C) at (0,0) {$S^n$};
\node (D) at (3,0) {$\GrR{n}{n+1}$};
\path[->]
(A) edge (B)
(C) edge (D)
(A) edge (C)
(B) edge (D);
\end{tikzpicture}
\end{center}

The above argument works whenever $B$ is a smooth manifold, because of the Whitney embedding theorem. More generally, one can show that every rank $n$ vector bundle $E\rightarrow B$ over a paracompact topological space $B$ is induced by $\EGLR{n}\rightarrow\BGLR{n}$. Since the total space of the associated $\GLR{n}$-principal bundle has the same homotopy type as $\EO{n}$, and $\EO{n}$ is weakly contractible, we have that $\BGLR{n}$ is at the same time the classifying space $\BG{\GLR{n}}$ for real vector bundles and the classifying space $\BG{\Or{n}}$ for real Euclidean vector bundles. A similar argument shows that $\BG{\GLC{n}}=\BG{\U{n}}=\BGLC{n}$ is the classifying space for complex vector bundles, and complex Hermitian bundles as well. In summary:
\begin{align}
\VectR{n}{B}&=[B:\BGLR{n}]\label{eq:vectreal}\\
\VectC{n}{B}&=[B:\BGLC{n}]\label{eq:vectcpx}
\end{align}

\subsection{Stiefel-Whitney and Chern classes}


In theory, formulas \eqref{eq:vectreal} and \eqref{eq:vectcpx} solve the classification problem for vector bundles. In real life, the set of homotopy classes of maps is very difficult to compute. However, the notion of universal bundle $\EG{G}\rightarrow\BG{G}$ can be used to define $G$-bundle invariants.

\begin{dfn}\label{dfn:classes}
Let $E\rightarrow B$ be a $G$-principal bundle, and $f:B\rightarrow\BG{G}$ a classifying map. Cohomology classes $c$ of $B$ induced by cohomology classes $c'$ of $\BG{G}$, namely $c=f^*(c')\in H^*(B;R)$, are called \emph{characteristic classes} of $E\rightarrow B$. Here $R$ is any ring of coefficients.
\end{dfn}

This definition works because different classifying maps are homotopic, and two homotopic maps induce the same homomorphism on cohomology. Characteristic classes of vector bundles are then the pull-back by any classifying map of cohomology classes of the Grassmannian. In the following theorem, we describe the cohomology ring structure of $\BGLR{n}$ and $\BGLC{n}$ with coefficients in $\ZZ$ and $\ZZ_2$.

\begin{thm}\label{thm:cohomology}
\begin{align}
H^*(\BGLR{n};\ZZ_2)&=\ZZ_2[w_1,\dots,w_n],\qquad w_i\in H^i(\BGLR{n};\ZZ_2)\label{frm:st-wh}\\
H^*(\BGLC{n};\ZZ)&=\ZZ[c_1,\dots,c_n],\qquad c_i\in H^{2i}(\BGLC{n};\ZZ)\label{frm:chern}
\end{align}
\end{thm}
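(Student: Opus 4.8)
The plan is to treat both formulas by a single induction on $n$, using the Gysin sequence of the unit sphere bundle of the tautological bundle. The base case $n=1$ is classical: $\BGLR{1}=\RR P^{\infty}$ and $\BGLC{1}=\CP{\infty}$, and from their CW structures (one cell in each dimension, only even dimensions in the complex case) one reads off $H^*(\RR P^{\infty};\ZZ_2)=\ZZ_2[w_1]$ and $H^*(\CP{\infty};\ZZ)=\ZZ[c_1]$, with $|w_1|=1$ and $|c_1|=2$.

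For the inductive step I would first identify the sphere bundle of the tautological bundle with the previous classifying space. Since $\EO{n}$ is weakly contractible with a free $\Or{n}$-action and $S^{n-1}=\Or{n}/\Or{n-1}$, the associated sphere bundle is
\[
\EO{n}\times_{\Or{n}}S^{n-1}=\EO{n}/\Or{n-1}\simeq\BGLR{n-1},
\]
and likewise $S^{2n-1}=\U{n}/\U{n-1}$ identifies the complex sphere bundle with $\BGLC{n-1}$. This yields fibrations
\[
S^{n-1}\hookrightarrow\BGLR{n-1}\xrightarrow{\ \pi\ }\BGLR{n},\qquad S^{2n-1}\hookrightarrow\BGLC{n-1}\xrightarrow{\ \pi\ }\BGLC{n},
\]
where $\pi$ is induced by the inclusion of structure groups, so that the pulled-back tautological bundle splits off a trivial line: $\pi^*\EGLR{n}\cong\EGLR{n-1}\oplus\trivial{1}$, and similarly in the complex case.

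Next I would run the Gysin sequence. A real $n$-plane bundle is $\ZZ_2$-oriented and a complex $n$-plane bundle is canonically oriented, so in each case the Euler class is defined and equals the top class $w_n$, respectively $c_n$; over $\ZZ_2$ the sequence reads
\[
\cdots\to H^{k-n}(\BGLR{n})\xrightarrow{\ \cup w_n\ }H^{k}(\BGLR{n})\xrightarrow{\ \pi^*\ }H^{k}(\BGLR{n-1})\xrightarrow{\ \partial\ }H^{k-n+1}(\BGLR{n})\to\cdots,
\]
and identically over $\ZZ$ with the shift $n$ replaced by $2n$. The decisive point is that $\pi^*$ is surjective: by naturality and the Whitney sum formula, $\pi^*w_i=w_i(\EGLR{n-1}\oplus\trivial{1})=w_i$ for $i\le n-1$, and these generate $H^*(\BGLR{n-1};\ZZ_2)$ by the inductive hypothesis. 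Surjectivity forces $\partial=0$, so the sequence breaks into short exact sequences
\[
0\to H^{k-n}(\BGLR{n})\xrightarrow{\ \cup w_n\ }H^{k}(\BGLR{n})\xrightarrow{\ \pi^*\ }H^{k}(\BGLR{n-1})\to 0.
\]
Thus $w_n$ is a non-zero-divisor and $H^*(\BGLR{n};\ZZ_2)/(w_n)\cong\ZZ_2[w_1,\dots,w_{n-1}]$. A short downward induction on degree (lift a class from the quotient, subtract the lift, and observe that the difference lies in $\ker\pi^*=w_n\cdot H^*(\BGLR{n})$) shows that $w_1,\dots,w_n$ generate the ring, while the non-zero-divisor property rules out any polynomial relation; hence $H^*(\BGLR{n};\ZZ_2)=\ZZ_2[w_1,\dots,w_n]$. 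The complex case is word for word the same with $\ZZ$ in place of $\ZZ_2$ and $c_i$ in place of $w_i$.

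I expect the main obstacle to lie not in the formal algebra of these short exact sequences but in the two geometric inputs that make the Gysin sequence applicable: identifying the sphere bundle of the tautological bundle with $\BGLR{n-1}$ (resp.\ $\BGLC{n-1}$), which relies on the homogeneous-space description of $S^{n-1}$ together with the weak contractibility of $\EO{n}$, and establishing the surjectivity of $\pi^*$, which presupposes that the Stiefel--Whitney and Chern classes are already available with their naturality and Whitney-sum properties. Once these are secured, the vanishing of $\partial$, and with it the polynomial structure, follows automatically.
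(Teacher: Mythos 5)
Your proof is correct and follows exactly the first of the three routes the paper itself indicates for Theorem \ref{thm:cohomology}: induction via the Gysin sequences of the sphere bundles $S(\EGLR{n})\rightarrow\BGLR{n}$ and $S(\EGLC{n})\rightarrow\BGLC{n}$, with the sphere bundles identified with $\BGLR{n-1}$ and $\BGLC{n-1}$ precisely through the homogeneous-space description you give. The only caveat — which you correctly flag yourself — is that the surjectivity of $\pi^*$ presupposes Stiefel--Whitney and Chern classes with naturality and the Whitney sum formula constructed \emph{independently} of the theorem (e.g.\ via Steenrod squares and the Thom isomorphism, or the Leray--Hirsch projective-bundle construction), since in the paper's own logical order these classes are defined only \emph{after} Theorem \ref{thm:cohomology}; the references the paper cites supply such an independent construction, so no circularity remains once one is fixed.
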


Theorem \ref{thm:cohomology} can be proved in several ways. One can use the sphere bundles $S(\EGLR{n})\rightarrow\BGLR{n}$, $S(\EGLC{n})\rightarrow\BGLC{n}$ and their associated Gysin sequences. Or one can use the \emph{splitting principle} \cite[from page 273]{BoTDFA}, as for instance in \cite[Theorem 7.1]{MiSChC}. A third possibility is using cellular cohomology, and a cellular decomposition of the Grassmannian by means of \emph{Schubert cells}, see for instance the very clear description in \cite[pages 31--34]{HatVBK}.

Combining definition \ref{dfn:classes} and theorem \ref{thm:cohomology}, we define the Stiefel-Whitney and the Chern classes. For Stiefel-Whitney classes, this approach was used for the first time by Pontrjagin in \cite{PonCCM,PonCCD}, see \cite[IV.3.C]{DieHAD}.

\begin{dfn}\label{dfn:st-wh}
Let $E\rightarrow B$ be a rank $n$ real vector bundle, $f:B\rightarrow\BGLR{n}$ a classifying map and $w_i$ the generator of $H^i(\BGLR{n};\ZZ_2)$ as in formula \eqref{frm:st-wh}. The cohomology classes $w_i(E)\ug f^*(w_i)\in H^i(B;\ZZ_2)$, for $i=1,\dots,n$, are called the \emph{Stiefel-Whitney classes} of $E\rightarrow B$. The \emph{total Stiefel-Whitney class} is then $w(E)\ug 1+w_1(E)+\dots+w_n(E)$.
\end{dfn}

\begin{dfn}\label{dfn:chern}
Let $E\rightarrow B$ be a rank $n$ complex vector bundle, $f:B\rightarrow\BGLR{n}$ a classifying map and $c_i$ the generator of $H^{2i}(\BGLC{n};\ZZ)$ as in formula \eqref{frm:chern}. The cohomology classes $c_i(E)\ug f^*(c_i)\in H^{2i}(B;\ZZ)$, for $i=1,\dots,n$, are called the \emph{Chern classes} of $E\rightarrow B$. The \emph{total Chern class} is then $c(E)\ug 1+c_1(E)+\dots+c_n(E)$.
\end{dfn}

\begin{rem}\label{rem:axiom}
Denote by $\cdot$ the product in the cohomology ring (for instance, the cup product for singular cohomology, or the wedge product for de Rham cohomology). Stiefel-Whitney and Chern classes satisfies the following fundamental properties.
\begin{enumerate}
\item This is called \emph{naturality}, and follows from homotopic invariance of the classifying map: if $g:B'\rightarrow B$ is a continuous map, then
\[
w(g^*E)=g^*w(E)\quad|\quad c(g^*E)=g^*c(E)
\]
\item The \emph{Whitney product formula} follows from the splitting principle \cite[page 66]{HatVBK}:
\begin{equation}\label{eq:product}
w(E\oplus E')=w(E)\cdot w(E')\quad|\quad c(E\oplus E')=c(E)\cdot c(E')
\end{equation}
\item The \emph{normalization property} follows from the very definition: $w_1(\BGLR{1})$ is the generator of $H^1(\BGLR{1};\ZZ_2)$ | $c_1(\BGLC{1})$ is the generator of $H^2(\BGLC{1};\ZZ)$.
\end{enumerate}
\end{rem}

\begin{rem}
As usually happens, definition is not as important as properties, and for characteristic classes this is particularly true: in fact, the above properties can be used to define characteristic classes axiomatically. This approach was first taken by Hirzebruch for Chern classes in \cite[pages 60--63]{HirNTM}, see \cite[End of IV.1.E]{DieHAD}.
\end{rem}

\begin{rem}\label{rem:conj}
The \emph{conjugate bundle} $\bar{E}\rightarrow B$ has the same underlying real vector bundle $E_\RR$, and the conjugate complex multiplication on fibers: $z\cdot_{\bar{E}}v\ug\bar{z}\cdot_E v$. In terms of associated almost complex structure $J$, we have $E=(E_\RR,J)$ and $\bar{E}=(E_\RR,-J)$ \cite[page 123]{KoNFD2}. For the conjugate bundle, Chern classes are given by
\begin{equation}\label{frm:conj}
c_k(\bar{E})=(-1)^k c_k(E)
\end{equation}
This formula comes from the fact that the identity map $\Id:E\rightarrow\bar{E}$ is conjugate linear: $\Id(z\cdot_E v)=\bar{z}\cdot_{\bar{E}}\Id(v)$ \cite[page 167]{MiSChC}.
\end{rem}

\begin{rem}\label{rem:sta-tri}
Denote by $\trivial{n}$ the trivial vector bundle $\RR^n\times B\rightarrow B$ or $\CC^n\times B\rightarrow B$ (which one, will always be clear from the context).
A rank $n$ vector bundle $E$ is \emph{stably trivial} if for a certain $k$ one has $E\oplus\trivial{k}=\trivial{n+k}$. Since the constant map induces the trivial bundle, naturality and product formula in remark \ref{rem:axiom} gives that Stiefel-Whitney and Chern classes are trivial for any stably trivial vector bundle.
This is in particular true for tangent bundles $T(S^n)$ of spheres, because $T(S^n)\oplus\trivial{1}=\trivial{n+1}$, where $\trivial{1}$ is the normal bundle of the embedding $S^n\subset\RR^{n+1}$.
\end{rem}

\subsection{Pontryagin classes}

If you are wondering why $H^*(\BGLR{n};\ZZ)$ doesn't appear in theorem \ref{thm:cohomology}, the reason is that its ring structure is a bit more complicated. In short, $H^*(\BGLR{n};\ZZ)$ contains torsion classes of order 2 which are ``essentially'' Stiefel-Whitney classes - namely, images under the cohomology map $H^i(\BGLR{n};\ZZ_2)\rightarrow H^{i+1}(\BGLR{n};\ZZ_2)$ induced by the short exact sequence
\[
0\rightarrow\ZZ_2\stackrel{2\cdot}{\rightarrow}\ZZ_4\rightarrow\ZZ_2\rightarrow 0
\]
Factoring out these torsion classes, one is left with a free algebra as in theorem \ref{thm:cohomology}:
\begin{equation}\label{frm:pontr}
\frac{H^*(\BGLR{n};\ZZ)}{\text{torsion}}=\ZZ[p_1,\dots,p_k],\qquad p_i\in H^{4i}(\BGLR{n};\ZZ),\qquad n=2k \text{ or } n=2k+1
\end{equation}
For a thorough description of the ring structure of $H^*(\BGLR{n};\ZZ)$, see \cite{BroCBB}.

\begin{dfn}\label{dfn:pontr}
Let $E\rightarrow B$ be a rank $n=2k$ or $n=2k+1$ real vector bundle, $f:B\rightarrow\BGLR{n}$ a classifying map and $p_i$ the generator of $H^{4i}(\BGLR{n};\ZZ)/\text{torsion}$ as in formula \eqref{frm:pontr}. The cohomology classes $p_i(E)\ug f^*(p_i)\in H^{4i}(B;\ZZ)$, for $i=1,\dots,k$, are called the \emph{Pontryagin classes} of $E\rightarrow B$. The \emph{total Pontryagin class} is then $p(E)\ug 1+p_1(E)+\dots+p_k(E)$.
\end{dfn}

Pontryagin classes can be also defined by means of Chern classes. Consider the complexification $E_\CC\ug E\otimes\CC$, and notice that $E_\CC=\overline{E_\CC}$. Then, since conjugation swaps sign for odd-indexed Chern classes by formula \eqref{frm:conj}, the only non-torsion Chern classes are
\begin{equation}\label{eq:pontr}
p_i(E)\ug (-1)^i c_{2i}(E_\CC)\in H^{4i}(B;\ZZ),\qquad i=1,\dots,[n/2]=k
\end{equation}


\begin{rem}\label{rem:sta-tri-pontr}
Since Pontryagin classes are defined by means of Chern classes, remark \ref{rem:sta-tri} still holds: all Pontryagin classes of a stably trivial bundle vanish. In particular, for any $n$ we have $p(T(S^n))=1$, or, in other words, all Pontryagin classes of spheres vanish.
\end{rem}

To close this section, a trivia on \emph{rational} Pontryagin classes of tangent bundles: they are topological invariants, as shown in \cite{NovMFA}, so that one would expect  a topological definition. But such a definition is still missing. However, a combinatorial definition was given in \cite{ThoCCP}.

\subsection{Euler class}

Whenever we have a distinguished orientation on a real vector bundle $E\rightarrow B$, another characteristic class appears: the \emph{Euler class} $e(E)$. In terms of classifying space, the Euler class belongs to the cohomology of $\BG{\SO{n}}=\BGSO{n}$, where $\BGSO{n}$ is the Grassmannian of \emph{oriented} $n$-dimensional vector subspaces of $\RR^{\infty}$. Whenever $n$ is odd, $E$ admits an orientation reversing isomorphism, and this translates to $e(E)=e(-E)=-e(E)$, meaning that $e(E)$ is a torsion class. For this reason, the Euler class is often considered only for even $n=2k$.

The following is the cohomology structure up to torsion of $\BGSO{2k}$:
\begin{equation}\label{frm:euler}
\frac{H^*(\BGSO{2k};\ZZ)}{\text{torsion}}=\ZZ[p_1,\dots,p_{k-1},e],\quad p_i\in H^{4i}(\BGSO{2k};\ZZ),\quad e\in H^{2k}(\BGSO{2k};\ZZ)
\end{equation}

If $B$ is an oriented smooth manifold of dimension $d$, with Euler characteristic
\[
\chi(B)\ug\sum_{i=0}^d(-1)^i\rank{H_i(B;\ZZ)}
\]
and $[B]\in H_d(B;\ZZ)$ the fundamental homology class of $B$, we have
\[
\chi(B)=\langle e(T(B),[B]\rangle
\]
where the brackets denote the usual Kronecker pairing. This is the reason of the name for the Euler characteristic class.

When $E\rightarrow B$ is complex of rank $n$, the underlying real vector bundle $E_\RR$ has rank $2n$, and $e(E_\RR)\in H^{2n}(B;\ZZ)$ can be identified with the top Chern class of $E$:
\begin{equation}\label{eq:topchern}
e(E_\RR)=c_{n}(E)
\end{equation}

\section{Ruling out $S^{4k}$}

The same argument showing that $S^4$ does not admit an almost complex structure holds for every $S^{4k}$, and this latter is somehow clearer to state. Therefore, in this section we consider $S^{4k}$, and prove that it doesn't admit any almost complex structure.
The core of the proof is the fact that an almost complex structure on $S^{4k}$ would imply the last Pontryagin class to be twice the Euler class.

\begin{lem}\label{lem:eulpon}
If there is an almost complex structure on $S^{4k}$, then
\[
(-1)^k p_k(T(S^{4k}))=2e(T(S^{4k}))
\]
\end{lem}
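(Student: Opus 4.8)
The plan is to express both sides of the claimed identity in terms of a single cohomology class, the top Chern class of the complex bundle determined by the almost complex structure, and then to exploit the fact that the sphere carries very little cohomology. Suppose $J$ is an almost complex structure on $S^{4k}$, and write $W$ for the complex vector bundle $(T(S^{4k}),J)$; its underlying real bundle is $T(S^{4k})$, and its complex rank is $2k$. By formula \eqref{eq:topchern} the Euler class is the top Chern class, $e(T(S^{4k}))=c_{2k}(W)$, so the lemma is equivalent to the assertion $(-1)^kp_k(T(S^{4k}))=2c_{2k}(W)$. Using the definition of Pontryagin classes in \eqref{eq:pontr}, namely $p_k(T(S^{4k}))=(-1)^kc_{2k}((T(S^{4k}))_\CC)$, this reduces further to showing $c_{2k}((T(S^{4k}))_\CC)=2c_{2k}(W)$.

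The key step, and the one I expect to be the main obstacle, is to identify the complexified tangent bundle. I would use the standard isomorphism $(T(S^{4k}))_\CC\cong W\oplus\bar{W}$, which arises from decomposing the complexification into the $\pm i$-eigenspaces of the extension of $J$: the $+i$-eigenspace is isomorphic to $W$ and the $-i$-eigenspace to the conjugate bundle $\bar{W}$ of Remark \ref{rem:conj}. Granting this, the Whitney product formula \eqref{eq:product} gives $c((T(S^{4k}))_\CC)=c(W)\cdot c(\bar{W})$, and in particular $c_{2k}((T(S^{4k}))_\CC)=\sum_{a+b=2k}c_a(W)c_b(\bar{W})$.

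Finally I would invoke the cohomology of the sphere: since $H^{2i}(S^{4k};\ZZ)=0$ for $0<i<2k$, every intermediate Chern class $c_a(W)$ and $c_b(\bar{W})$ with $0<a,b<2k$ vanishes. Hence in the sum only the two extreme terms survive, yielding $c_{2k}((T(S^{4k}))_\CC)=c_{2k}(W)+c_{2k}(\bar{W})$. Applying \eqref{frm:conj} at the even index $2k$ gives $c_{2k}(\bar{W})=(-1)^{2k}c_{2k}(W)=c_{2k}(W)$, so the right-hand side equals $2c_{2k}(W)$, as required. Substituting back into the formula for $p_k$ produces $(-1)^kp_k(T(S^{4k}))=(-1)^k(-1)^k\,2\,c_{2k}(W)=2e(T(S^{4k}))$. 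Apart from the eigenspace splitting, every step is routine bookkeeping driven by the vanishing of the middle-dimensional cohomology of $S^{4k}$, which is exactly what forces all but the extreme Chern-class products to disappear.
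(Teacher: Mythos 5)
Your proof is correct and follows essentially the same route as the paper: the eigenspace splitting $T(S^{4k})\otimes\CC\cong W\oplus\bar{W}$, the Whitney product formula, the vanishing of all intermediate Chern classes forced by the cohomology of $S^{4k}$, and the conjugation formula \eqref{frm:conj} in even degree $2k$. The only difference is presentational—you track the degree-$4k$ component of the sum explicitly, while the paper computes with total classes—so there is nothing to add.
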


\begin{proof}
Assume that an almost complex structure $J$ on $S^{4k}$ exists. Then each tangent space is a complex vector space \cite[page 114]{KoNFD2}, and $(T(S^{4k}),J)$ is a complex vector bundle that here we denote briefly by $T$. Since $S^{4k}$ has non-trivial cohomology only in dimension $4k$, all Pontryagin and Chern classes vanish except $p_k(T(S^{4k}))$ and $c_{2k}(T)$. Moreover, extending $J$ to a complex linear endomorphism of $T(S^4)\otimes\CC$, we obtain the $\pm i$-eigenspaces splitting $T(S^4)\otimes\CC=T\oplus\bar{T}$ \cite[IX, proposition 1.5]{KoNFD2}. Thus
\[
\begin{split}
1+(-1)^k p_k(T(S^{4k}))\stackrel{\eqref{eq:pontr}}{=}c(T(S^{4k})\otimes\CC)&=c(T\oplus \bar{T})\stackrel{\eqref{eq:product}}{=}c(T)\cdot c(\bar{T)})\\
&=(1+c_{2k}(T))\cdot(1+c_{2k}(T))=1+2c_{2k}(T)\stackrel{\eqref{eq:topchern}}{=}1+2e(T(S^{4k}))
\end{split}
\]
and this proves the lemma.
\end{proof}

\begin{thm}\label{thm:hirzebruchs4k}
The spheres $S^{4k}$ for $k\geq 1$ do not admit any almost complex structure.
\end{thm}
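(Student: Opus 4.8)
The plan is to argue by contradiction, feeding the identity of Lemma \ref{lem:eulpon} into two elementary facts about even-dimensional spheres: the vanishing of their Pontryagin classes and the non-vanishing of their Euler characteristic.

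First I would assume that $S^{4k}$ carries an almost complex structure, so that Lemma \ref{lem:eulpon} supplies
\[
(-1)^k p_k(T(S^{4k})) = 2\,e(T(S^{4k}))
\]
in $H^{4k}(S^{4k};\ZZ)$. The left-hand side vanishes by Remark \ref{rem:sta-tri-pontr}: the tangent bundle of a sphere is stably trivial, hence all of its Pontryagin classes are zero, giving $p_k(T(S^{4k}))=0$. This forces $2\,e(T(S^{4k}))=0$, and since $H^{4k}(S^{4k};\ZZ)\cong\ZZ$ is torsion-free, it further forces $e(T(S^{4k}))=0$.

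The contradiction then comes from pairing the Euler class with the fundamental class. As recalled in the previous section, $\langle e(T(S^{4k})),[S^{4k}]\rangle=\chi(S^{4k})$, and every even-dimensional sphere satisfies $\chi(S^{2m})=1+(-1)^{2m}=2\neq 0$. Hence $e(T(S^{4k}))$ is twice a generator of $H^{4k}(S^{4k};\ZZ)$ and is in particular nonzero, contradicting $e(T(S^{4k}))=0$. Therefore no almost complex structure can exist on $S^{4k}$ for $k\geq 1$.

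The argument is genuinely short, so I do not expect a real technical obstacle — the heavy lifting has already been done in Lemma \ref{lem:eulpon}. The only step deserving a moment's care is the passage from $2e=0$ to $e=0$, which is legitimate precisely because $H^{4k}(S^{4k};\ZZ)$ has no $2$-torsion. Conceptually, the whole proof turns on the clash between the factor $2$ produced by the splitting $T(S^{4k})\otimes\CC=T\oplus\bar{T}$ into summands with equal top Chern classes and the value $\chi(S^{4k})=2$: it is this numerical coincidence, together with the vanishing of $p_k$, that rules out exactly the dimensions $4k$.
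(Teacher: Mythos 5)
Your proof is correct and takes essentially the same route as the paper: both feed Lemma \ref{lem:eulpon} into the vanishing of Pontryagin classes of spheres (Remark \ref{rem:sta-tri-pontr}) and the non-vanishing of $\chi(S^{4k})=2$ via the Euler class pairing. The only cosmetic difference is that the paper pairs $p_k(T(S^{4k}))$ directly with the fundamental class to obtain $(-1)^k\,2\chi(S^{4k})=\pm4\neq 0$, while you first deduce $e(T(S^{4k}))=0$ from $2e=0$ using torsion-freeness of $H^{4k}(S^{4k};\ZZ)$ — an immaterial reorganization of the same argument.
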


\begin{proof}
If $S^{4k}$ admits an almost complex structure, we get $(-1)^k p_k(T(S^{4k}))=2e(T(S^{4k}))$ from lemma \ref{lem:eulpon}. But $e(T(S^{4k}))$ is non-trivial on even-dimensional spheres, because $\chi=2\neq 0$, and this contradicts remark \ref{rem:sta-tri-pontr} stating that all Pontryagin classes of spheres are trivial. More explicitly, denoting by $[S^{4k}]$ the fundamental class of $S^{4k}$  we have
\[
\langle p_k(T(S^{4k})),[S^{4k}]\rangle=(-1)^k\langle 2e(T(S^{4k}),[S^{4k}]\rangle=(-1)^k 2\chi(S^{4k})=(-1)^k 4\neq 0
\]
\end{proof}

\subsection{Ruling out $S^{4k}$ via Hirzebruch Signature Theorem}

Another nice way to prove theorem \ref{thm:hirzebruchs4k} is by using the Hirzebruch Signature Theorem. In short, lemma \ref{lem:eulpon} contradicts the fact that the signature of $S^{4k}$ must be zero. For completeness, we give here a brief sketch of this argument.

Let $M^{4k}$ be a connected, compact, smooth manifold of dimension $4k$. The singular cohomology cup product on $H^{2k}(M;\RR)$ and the Poincaré duality induce a symmetric bilinear form:
\begin{align}
\beta:H^{2k}(M;\RR)\times H^{2k}(M;\RR)&\rightarrow H^{4k}(M;\RR)\cong H^0(M;\RR)=\RR\\
(x,y) &\mapsto x\cup y
\end{align}
By Poincaré duality, $\beta$ is non-degenerate. The \emph{signature} $\sigma(M)$ of $M$ is the signature of $\beta$.

In the following, we briefly describe a relation between the signature of $M$ and the Pontryagin classes of $TM$ which is known as the \emph{Hirzebruch Signature Theorem}. For a more systematic discussion see \cite[Chapter I]{HirTMA}.

Consider the power series of
\[
Q(z) = \frac{\sqrt{z}}{\tanh\sqrt{z}} = 1+\sum_{k=1}^\infty (-1)^{k-1} \frac{2^{2k}}{(2k)!} B_k z^k
\]
where $B_k$ are the \emph{Bernoulli numbers}. In particular, $B_k>0$ and $\neq \frac{1}{2}$ for all $k$. Let $p_1,\dots,p_m$ be variables for $m \in \NN$. By the fundamental theorem of symmetric polynomials there are variables $\beta_1,\dots,\beta_m$ such that
\[
1+p_1z+p_2z^2+\ldots+p_mz^m = \prod_{i=1}^m (1+\beta_i z),
\]
i.e.\ we may consider $p_i$ as elementary symmetric polynomials in $\beta_1,\dots,\beta_m$. Now consider the coefficient of $z^k$ in the product $\prod_{i=1}^mQ(\beta_i z)$. Clearly it is a symmetric polynomial in $\beta_1,\ldots,\beta_m$, homogeneous of weight $k$. Therefore it can be expressed as a polynomial $L_k^m(p_1,\dots,p_k)$ in a unique way and moreover it does not depend on $m$ for $k\leq m$. Consequently we write $L_k\ug L^m_k$ for $k\leq m$. The coefficient $s_k$ of $p_k$ in $L_k$ is determined by
\[
\sum_{k=0}^\infty s_k  z^k = \frac{1}{2} + \frac{1}{2}\frac{2\sqrt{z}}{\sinh 2\sqrt{z}}
\]
hence (see \cite[page 12]{HirTMA})
\[
s_0 =1,\quad s_k = \frac{2^{2k}(2^{2k-1}-1)}{(2k)!}B_k,\; k\geq 1
\]
The first polynomials $L_k$ are given by
\[
L_1 = \frac{1}{3}p_1, \quad L_2=\frac{1}{45}\left( 7p_2 -p_1^2\right), \quad L_3 = \frac{1}{3^2\cdot 5\cdot 7}\left(62p_3 -13p_2p_1 +2p_1^3\right),\quad \dots
\]

\begin{dfn}
The \emph{$L$-class} of $M^{4k}$ is the cohomology class
\[
L_k(p_1,\dots,p_k) \in H^{4k}(M^{4k};\QQ),
\]
where $p_i = p_i(TM^{4k})$.
\end{dfn}


\begin{thm}\label{thm:signa}\cite[Hirzebruch Signature Theorem]{HirTMA}

Let $M^{4k}$ be a compact and oriented manifold and let $[M] \in H_{4k}(M)$ be the fundamental class of $M$. Then we have
\[
\sigma(M) = \langle L_k(p_1,\dots,p_k), [M]\rangle
\]
In particular $L_k(p_1,\dots,p_k)$ is integral.
\end{thm}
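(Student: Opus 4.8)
The plan is to recognize both sides of the claimed identity as \emph{genera} --- ring homomorphisms $\Omega_*^{\mathrm{SO}}\otimes\QQ\to\QQ$ from the rational oriented cobordism ring --- and then to check that they coincide on a set of polynomial generators. The decisive external input will be Thom's theorem that
\[
\Omega_*^{\mathrm{SO}}\otimes\QQ=\QQ[\CP{2},\CP{4},\CP{6},\dots],
\]
a polynomial algebra on the classes of the even complex projective spaces. I would cite this rather than reprove it, since it rests on the Pontryagin--Thom construction and a rational computation of the homotopy of Thom spectra, and it is the genuinely hard ingredient behind the whole theorem.

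First I would verify that the signature $M\mapsto\sigma(M)$ is a genus. Additivity on disjoint unions is immediate, and multiplicativity $\sigma(M\times N)=\sigma(M)\sigma(N)$ follows from the Künneth decomposition of the middle cohomology together with the behaviour of a symmetric form under tensor product. The essential point is cobordism invariance: if $M^{4k}=\partial W^{4k+1}$ then $\sigma(M)=0$. This is the ``half lives, half dies'' argument --- the image of the restriction $H^{2k}(W;\RR)\to H^{2k}(M;\RR)$ is a Lagrangian subspace for the intersection form $\beta$, forcing its signature to vanish. I expect this to be the most delicate step on the signature side.

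Next I would check that $M\mapsto\langle L_k(p_1,\dots,p_k),[M]\rangle$ is also a genus. Multiplicativity is built into the construction: because the series $Q(z)=\sqrt{z}/\tanh\sqrt{z}$ has constant term $1$, the classes $L_k$ assemble into a \emph{multiplicative sequence}, so that the total $L$-class satisfies $L(E\oplus E')=L(E)\cdot L(E')$. Cobordism invariance is then the classical statement that Pontryagin numbers vanish on boundaries, compatible with remark \ref{rem:sta-tri-pontr}. Together these give a ring homomorphism on $\Omega_*^{\mathrm{SO}}\otimes\QQ$.

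Granting Thom's theorem, two genera agree as soon as they agree on each generator $\CP{2k}$, so the whole theorem collapses to the single identity $\sigma(\CP{2k})=\langle L_k,[\CP{2k}]\rangle$, which I would verify by direct computation. On the left, $H^{2k}(\CP{2k};\RR)$ is one-dimensional, spanned by $a^k$ with $a$ the hyperplane class, and $\langle a^{2k},[\CP{2k}]\rangle=1>0$; hence $\beta$ is positive definite and $\sigma(\CP{2k})=1$. On the right, $p(T\CP{2k})=(1+a^2)^{2k+1}$, so all formal Pontryagin roots equal $a^2$ and the total $L$-class is $(a/\tanh a)^{2k+1}$; thus $\langle L_k,[\CP{2k}]\rangle$ is the coefficient of $a^{2k}$ in this series. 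A short residue computation, substituting $u=\tanh a$, rewrites this as the coefficient of $u^{2k}$ in $1/(1-u^2)$, which is $1$. Hence both genera take the value $1$ on every generator and therefore agree, proving the formula; integrality of $L_k(p_1,\dots,p_k)$ is then immediate, since the signature $\sigma(M)$ is an integer.
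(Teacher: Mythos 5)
Your argument is correct. Note that the paper offers no proof of Theorem \ref{thm:signa} at all: the theorem is stated with a citation to Hirzebruch's book \cite{HirTMA}, and what you have written out --- the signature and the $L$-genus are both genera on $\Omega_*^{\mathrm{SO}}\otimes\QQ$, Thom's computation $\Omega_*^{\mathrm{SO}}\otimes\QQ=\QQ[\CP{2},\CP{4},\CP{6},\dots]$, and the verification $\sigma(\CP{2k})=1=\langle L_k,[\CP{2k}]\rangle$ on each generator via $p(T\CP{2k})=(1+a^2)^{2k+1}$ and the substitution $u=\tanh a$ --- is precisely the classical Thom--Hirzebruch proof found in that reference, so there is no divergence of method to report. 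All the key points are in place: cobordism invariance of $\sigma$ via the Lagrangian (``half lives, half dies'') argument using Poincar\'e--Lefschetz duality, multiplicativity of the $L$-sequence from $Q(0)=1$, vanishing of Pontryagin numbers on boundaries, and integrality of $L_k(p_1,\dots,p_k)$ as an immediate consequence of $\sigma(M)\in\ZZ$. The one step you compress, multiplicativity of the signature, does require checking that in the K\"unneth decomposition of the middle cohomology of $M\times N$ the cross terms $H^p(M)\otimes H^q(N)$ with $p\neq q$ pair off hyperbolically and the odd-degree contributions vanish, so that only the middle-times-middle summand contributes; this is standard bookkeeping and does not affect the soundness of the proof.
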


\emph{Alternative proof of theorem \ref{thm:hirzebruchs4k}.}

Since the cohomology of $S^{4k}$ is trivial except in dimensions $0$ and $4k$, all Pontryagin classes of $TS^{4k}$ are zero, except possibly the top class $p_k$, hence $L_k(S^{4k})=s_kp_k$.
Moreover the signature $\sigma(S^{4k})$ vanish, since $H^{2k}(S^{4k};\ZZ)=0$.
Combining lemma \ref{lem:eulpon} with the signature formula in theorem \ref{thm:signa}, we obtain a contradiction as follows
\[
0 =\sigma(S^{4k})=\langle s_kp_k,[S^{4k}]\rangle=(-1)^k 2\langle s_k e(T(S^{4k})),[S^{4k}]\rangle=(-1)^k 2s_k\chi(S^{4k})=(-1)^k 4s_k \neq 0\hfill\qed
\]

\section{K-Theory, Bott Periodicity Theorem and the Chern character}

We start with the definition  of topological $K$-theory. Let $B$ be a compact CW-complex and let $F(B)$ denote the free abelian group of isomorphism classes of complex vector bundles over $B$ (in the following the expression \emph{vector bundle} shall always mean complex vector bundle). We define $K(B)$ as the quotient qroup of $F(B)$ by the subgroup generated by elements of the form $[E\oplus F] - [E] -[F]$, where $E,F$ are vector bundles over $B$ and $[E]$ is the isomorphism class of $E$. It is easy to see that vector bundles $E\to B$ and $F \to B$ represent the same class in $K(B)$ if and only if $E$ and $F$ are \emph{stably isomorphic}, i.e.\ there is an $n \in \NN$ and a trivial bundle $\trivial{n} \to B$ of rank $n$ such that $E\oplus \trivial{n}$ is isomorphic to $F \oplus \trivial{n}$. Finally we will denote with $x_E$ the stable class of $E$ in $K(B)$.

In the following paragraphs we would like to present some properties of $K(B)$ which will be needed in the proof of Theorem \ref{thm:main}.

\subsection{Ring structure of $\mathbf{K(B)}$}

By construction $K(B)$ is an abelian group with respect to direct sums of vector bundles. The zero element is the trivial vector bundle of rank $0$. In addition $K(B)$ has also a multiplication coming from tensor product of vector bundles. More precisely, for elements in $K(B)$ represented by vector bundles $E$ and $F$, their product in $K(B)$ will be represented by $E\otimes F$. We extend this definition distributively to all elements of $K(B)$ and we obtain a well-defined multiplication on $K(B)$. This makes $K(B)$ into a commutative ring with unit (where the unit is represented by the trivial line bundle $\trivial{1} \to B$).

If $B'$ is another compact CW-complex and $f \colon B \to B'$ a continuous map, then it induces a ring homomorphism $K(f):K(B') \to K(B)$ by pulling back vector bundles. Hence $K$ can be regarded as a contravariant functor from the category of compact CW-complexes into the category of commutative rings.

\subsection{Bott periodicity in $K$-theory}

Let $B$ and $B'$ be compact CW-complexes. There is an \emph{external product} $\mu:K(B)\otimes K(B') \to K(B\times B')$, where $K(B)\otimes K(B')$ is the tensor product of rings, which is a ring again. If $a \otimes b \in K(B) \otimes K(B')$ then 
\[
\mu(a\otimes b) = p_B^*(a)p_{B'}^*(b)
\]
where $p_B$ and $p_{B'}$ are the projections of $B\times B'$ onto $B$ and $B'$ respectively. We extend $\mu$ linearly to all elements of $K(B) \otimes K(B')$. Taking $B'=S^2$ we obtain

\begin{thm}[Bott periodicity, cf. Corollary 2.2.3 in \cite{AtiKTh}]\label{thm:BottPeriodicity}
The homomorphism 
\[
\mu \colon K(B) \otimes K(S^2) \to K(B \times S^2)
\] 
is an isomorphism of rings.
\end{thm}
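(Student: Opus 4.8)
\emph{Proof proposal.} The plan is to follow the clutching-function approach of Atiyah. First I would write $S^2=\CP{1}$ as the union of two closed hemispheres $D_+$ and $D_-$ meeting along the equatorial circle $S^1$. Since each $D_\pm$ is contractible, a complex vector bundle over $B\times S^2$ is, up to isomorphism, obtained by gluing two bundles pulled back from $B$ along an automorphism over the overlap; that is, it is determined by a \emph{clutching function} $f\colon B\times S^1\to\GLC{n}$, a continuous family of invertible endomorphisms. The isomorphism class depends only on the homotopy class of $f$, and direct sum of bundles corresponds to block sum of clutching functions, so the computation of $K(B\times S^2)$ reduces to understanding clutching functions up to homotopy and stabilization. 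The base case $B=\mathrm{pt}$ gives the ring $K(S^2)=\ZZ[H]/((H-1)^2)$, generated over $\ZZ$ by the class of the tautological line bundle $H$ over $\CP{1}$. Writing $t=H-1$ and letting $p_B$, $p_{S^2}$ be the two projections of $B\times S^2$, the map $\mu$ sends $a\otimes(m+nt)$ to $m\,p_B^*(a)+n\,p_B^*(a)\cdot p_{S^2}^*(t)$, and the content of the theorem is that every class on $B\times S^2$ arises this way, and uniquely.

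Next I would replace an arbitrary clutching function by an algebraic one. Using that Laurent polynomials are uniformly dense in continuous functions on $S^1$, any $f$ can be replaced, without changing the isomorphism class of the bundle, by a \emph{Laurent polynomial clutching function} $f(x,z)=\sum_{|k|\le N}a_k(x)z^k$ with $a_k\in\End{p_B^*E}$ and $z\in S^1$: two clutching functions that are close in the sup-norm are homotopic through invertible ones, hence give isomorphic bundles. Multiplying by $z^N$, which corresponds to tensoring the bundle with $p_{S^2}^*H^{\otimes N}$, turns $f$ into an honest polynomial clutching function $\sum_{k=0}^{2N}a_k(x)z^k$. Finally I would \emph{linearize}: by the standard companion-matrix construction one shows that a degree-$d$ polynomial clutching function represents, after adding trivial summands, the same $K$-theory class as a \emph{linear} clutching function $z\,\alpha(x)+\beta(x)$. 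Thus in $K(B\times S^2)$ every class is, up to a twist by a power of $H$, represented by a bundle with linear clutching function.

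The heart of the argument, and what I expect to be the main obstacle, is the analysis of the linear clutching functions $z\,\alpha+\beta$. One shows that such a family can be normalized so that the fibres split according to the spectral decomposition of $\beta\alpha^{-1}$ (separating eigenvalues lying inside and outside the unit circle), and that each block contributes either a bundle pulled back from $B$ or that bundle tensored with $H$. Making this eigenvalue splitting continuous and globally defined over all of $B$, and checking that it is compatible with direct sums, is the delicate step, requiring a transversality-type argument for the spectrum together with a partition-of-unity patching. Granting this, one obtains that $\mu$ is surjective. For injectivity I would run the same normalization on a bundle representing an element of the kernel and track the resulting relations, using that the only relation among $1$ and $H$ over $S^2$ is $(H-1)^2=0$, and that the linearization and stabilization steps introduce no new relations over $B$. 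Combining surjectivity and injectivity yields that $\mu$ is a ring isomorphism, which is the assertion of the theorem.
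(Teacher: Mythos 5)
The first thing to say is that the paper itself contains no proof of this theorem: it is imported verbatim from Atiyah's book (the ``cf.\ Corollary 2.2.3'' in the statement), so there is no in-paper argument to compare yours against. What you have written is in fact an outline of the proof in the cited source (essentially also the one in Hatcher's $K$-theory notes, which the paper cites elsewhere), and its skeleton is the right one, in the right order: hemispherical decomposition, clutching functions (strictly speaking automorphisms of $p_B^*E$ over $B\times S^1$ for a bundle $E$ over $B$, not maps to $\GLC{n}$ --- your later choice $a_k\in\End{p_B^*E}$ tacitly corrects this), density of Laurent polynomial clutching functions, twisting by a power of $H$ to reduce to polynomial ones, and companion-matrix linearization.

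There are, however, two genuine gaps at exactly the point you flag as the heart of the argument. First, the splitting of a linear clutching function $z\,\alpha+\beta$ cannot be obtained from the spectral decomposition of $\beta\alpha^{-1}$, because $\alpha$ is in general not invertible: already the constant clutching function $f\equiv 1$ has $\alpha=0$. Atiyah's construction instead attaches to the pencil $z\,\alpha+\beta$ the contour-integral projection $Q(x)=\frac{1}{2\pi i}\oint_{|z|=1}\bigl(z\,\alpha(x)+\beta(x)\bigr)^{-1}\alpha(x)\,dz$, and this also dissolves your proposed remedy: no ``transversality-type argument'' or ``partition-of-unity patching'' is needed (nor would patching be usable, since the splitting must be canonical to be compatible with direct sums). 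Invertibility of $z\,\alpha+\beta$ on $|z|=1$ keeps the spectrum of the pencil off the unit circle, so $Q$ is automatically continuous in $x$, yields a splitting $E=E_+\oplus E_-$, and gives $[E,z\,\alpha+\beta]$ as the sum of a summand twisted by $H$ and a summand pulled back from $B$. Second, your injectivity paragraph is an assertion rather than an argument: ``the linearization and stabilization steps introduce no new relations'' is precisely what must be proved. In the cited proof, surjectivity and injectivity are obtained simultaneously by constructing an explicit inverse homomorphism $\nu\colon K(B\times S^2)\to K(B)\otimes K(S^2)$ out of the data $(E_+,E_-)$ and the twisting degree, and verifying that $\nu$ is well defined under each of the reduction moves (approximation, multiplication by $z$, linearization, splitting, stabilization); that well-definedness is the real content of your ``no new relations'' claim. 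Since you explicitly grant the splitting lemma and leave the inverse map unconstructed, your text is a faithful roadmap of Atiyah's proof rather than a complete proof.
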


\subsection{The Chern character}

Let $L \to B$  and $L' \to B$ be two complex line bundles. The first Chern class is an isomorphism from the group of isomorphism classes of line bundles over $B$ \footnote{where the multiplication is defined by the tensor product of line bundles} to $H^2(B;\ZZ)$, in particular $c_1(L\otimes L') = c_1(L) + c_1(L')$. In the same way, we would like to have a natural ring homomorphism from $K$-theory to (ordinary) cohomology. The product formula for line bundles suggest that we take the exponential of the first Chern class to preserve the multiplication of $K(B)$ into cohomology. Hence we define the \emph{Chern character of a line bundle} to be
\[
\mathop{ch}(L)=e^{c_1(L)} = 1 + c_1(L) +\frac{c_1(L)^2}{2!} + \frac{c_1(L)^3}{3!}+ \ldots \in H^*(B;\QQ)
\]
and since $B$ was assumed to be compact, the formal sum will be finite. Clearly we obtain
\[
\mathop{ch}(L\otimes L')=e^{c_1(L\otimes L')} = e^{c_1(L) + c_1(L')} = e^{c_1(L)}e^{c_1(L)} = \mathop{ch}(L)\cup \mathop{ch}(L').
\]
We extend this definition linearly to direct sums of line bundles, i.e. if $E=L_1\oplus\ldots\oplus L_n$, then
\[
\mathop{ch}(E) = \sum_i \mathop{ch}(L_i) = n + (t_1+\ldots +t_n) + \ldots + (t_1^k+\ldots+t_n^k)/k! + \ldots
\]
where $t_i = c_1(L_i)$. In this notation, the total Chern class is given by $c(E)=(1+t_1)(1+t_2)\cdot\ldots\cdots(1+t_n) = 1 + \sigma_1 + \ldots + \sigma_n$, where $\sigma_j = c_j(E)$ is the $j$-th elementary symmetric polynomial in the $t_i$'s. Moreover, if $\nu_k$ denotes the $k$-th Newton polynomial (cf.\ \cite{MiSChC}) it satisfies the relation
\[
t_1^k+\ldots+t_n^k = \nu_k(\sigma_1,\ldots,\sigma_k) =\nu_k(c_1(E),\ldots,c_k(E)).
\]
Hence the \emph{Chern character of $E$} becomes
\begin{equation}
\mathop{ch}(E) = \dim E + \sum_{k> 0}\frac{1}{k!}\nu_k(c_1(E),\ldots,c_k(E)).
\end{equation}
Since the right hand side depends only on the vector bundle $E$, we take this as a definition for the Chern character for an arbitrary vector bundle over $B$. Using the \emph{splitting principle} it is easy to show 
\begin{prp}[cf. Proposition 4.2 in \cite{HatVBK}]\label{prop:chSumsAndProducts}
For complex vector bundles $E$ and $E'$ over $B$ we have 
\begin{enumerate}
\item $\mathop{ch}(E\oplus E') = \mathop{ch}(E) + \mathop{ch}(E')$,
\item $\mathop{ch}(E\otimes E') = \mathop{ch}(E) \cup \mathop{ch}(E')$.
\end{enumerate}
\end{prp}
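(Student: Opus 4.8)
\emph{Proof proposal.} The plan is to reduce both identities to the case in which $E$ and $E'$ are direct sums of line bundles, where they hold by the very definition of $\mathop{ch}$, and then to transport them to arbitrary bundles by means of the splitting principle. The two structural facts I would lean on are, first, naturality of $\mathop{ch}$: since $\mathop{ch}(E)$ is a polynomial with rational coefficients in the Chern classes and Chern classes are natural by remark \ref{rem:axiom}, we have $g^*\mathop{ch}(E)=\mathop{ch}(g^*E)$ for every continuous $g$; and second, that for a sum of line bundles the definition gives directly $\mathop{ch}(L_1\oplus\dots\oplus L_n)=\sum_i\mathop{ch}(L_i)$ with $\mathop{ch}(L)=e^{c_1(L)}$.

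First I would recall the splitting principle in the form needed: for a complex vector bundle $E\to B$ there is a space $F(E)$ and a map $p\colon F(E)\to B$ with $p^*\colon H^*(B;\QQ)\to H^*(F(E);\QQ)$ injective and $p^*E$ a direct sum of line bundles. Applying this first to $E$, and then over the resulting space to the pullback of $E'$, and composing the two projections (a composition of cohomology-injective maps is again injective), I obtain a single map $p\colon\tilde B\to B$ along which $p^*$ is injective and both $p^*E=\bigoplus_i L_i$ and $p^*E'=\bigoplus_j L'_j$ are sums of line bundles.

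For statement (1), I would invoke naturality to write $p^*\mathop{ch}(E\oplus E')=\mathop{ch}(p^*E\oplus p^*E')$; since the right-hand bundle is a sum of line bundles, the definition of $\mathop{ch}$ on such sums gives $\mathop{ch}(p^*E\oplus p^*E')=\sum_i\mathop{ch}(L_i)+\sum_j\mathop{ch}(L'_j)=p^*\mathop{ch}(E)+p^*\mathop{ch}(E')$, so that $p^*$ annihilates $\mathop{ch}(E\oplus E')-\mathop{ch}(E)-\mathop{ch}(E')$ and injectivity forces (1). For statement (2), I would expand $p^*E\otimes p^*E'=\bigoplus_{i,j}(L_i\otimes L'_j)$ and use the already-established line-bundle identity $\mathop{ch}(L_i\otimes L'_j)=e^{c_1(L_i)+c_1(L'_j)}=\mathop{ch}(L_i)\cup\mathop{ch}(L'_j)$ on each summand; summing over $i,j$ factors the total as $\bigl(\sum_i\mathop{ch}(L_i)\bigr)\cup\bigl(\sum_j\mathop{ch}(L'_j)\bigr)=p^*\mathop{ch}(E)\cup p^*\mathop{ch}(E')$, and injectivity of $p^*$ again yields the claim.

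The steps are all routine once the splitting principle is available; the only point requiring genuine care is the \emph{simultaneous} splitting of $E$ and $E'$ over a common base with an injective cohomology pullback, which I handle by iterating the construction as above. A secondary point worth stating explicitly is the compatibility of the two descriptions of $\mathop{ch}$ on a sum of line bundles — that the Newton-polynomial expression $\sum_{k>0}\frac{1}{k!}\nu_k(c_1,\dots,c_k)$ reduces to $\sum_i e^{t_i}$ when the $c_j$ are the elementary symmetric functions of the $t_i=c_1(L_i)$ — but this is exactly the identity $t_1^k+\dots+t_n^k=\nu_k(\sigma_1,\dots,\sigma_k)$ recorded just before the definition, so no further computation is needed.
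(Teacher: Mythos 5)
Your proposal is correct and follows exactly the route the paper indicates: the paper itself proves this proposition only by appeal to the splitting principle (citing Proposition 4.2 in \cite{HatVBK}), and your argument is the standard fleshed-out version of that, including the two points that genuinely need care --- the simultaneous splitting of $E$ and $E'$ over a common base with injective $p^*$, and the compatibility of the Newton-polynomial definition of $\mathop{\mathrm{ch}}$ with the sum-of-exponentials formula on sums of line bundles.
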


Note that for Chern classes we have the identity $c(E\oplus\trivial n) = c(E)$, hence $c_k(E \oplus\trivial n) = c_k(E)$ for all $k$. It follows that the Chern character is well-defined on the stable class $x_E \in K(B)$ of $E$ and therefore it descends to a map $\mathop{\mathrm{ch}}\colon K(B) \to H^*(B;\QQ)$. From Proposition \ref{prop:chSumsAndProducts} we obtain

\begin{prp}\label{prop:chRingHom}
The Chern character $\mathop{\mathrm{ch}}\colon K(B) \to H^*(B;\QQ)$ is a ring homomorhism. 
\end{prp}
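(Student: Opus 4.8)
The plan is to reduce everything to Proposition \ref{prop:chSumsAndProducts} via the universal property of the Grothendieck group. Recall that $K(B)$ is by construction the Grothendieck group of the commutative monoid of isomorphism classes of vector bundles under $\oplus$; consequently every element of $K(B)$ can be written as a formal difference $x_E - x_F$ of stable classes of genuine vector bundles, addition corresponds to direct sum, $x_E + x_{E'} = x_{E\oplus E'}$, and multiplication corresponds to tensor product, $x_E\cdot x_{E'} = x_{E\otimes E'}$. First I would settle additivity: the assignment $E\mapsto\ch{E}$ is an additive map from the monoid of vector bundles to the additive group $(H^*(B;\QQ),+)$ by part (1) of Proposition \ref{prop:chSumsAndProducts}, so by the universal property of the Grothendieck group it extends uniquely to a group homomorphism, and this extension is precisely the descended map $\mathop{\mathrm{ch}}\colon K(B)\to H^*(B;\QQ)$ discussed just above. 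In particular $\ch{x_E - x_F}=\ch{E}-\ch{F}$, so that $\mathop{\mathrm{ch}}$ automatically respects addition.

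Next I would check multiplicativity on differences. Given $x=x_E-x_F$ and $y=x_{E'}-x_{F'}$, bilinearity of the ring structure of $K(B)$ gives $xy = x_{E\otimes E'} - x_{E\otimes F'} - x_{F\otimes E'} + x_{F\otimes F'}$. Applying the additivity just established and then part (2) of Proposition \ref{prop:chSumsAndProducts} to each of the four summands yields
\[
\ch{xy} = \ch{E}\cup\ch{E'} - \ch{E}\cup\ch{F'} - \ch{F}\cup\ch{E'} + \ch{F}\cup\ch{F'} = (\ch{E}-\ch{F})\cup(\ch{E'}-\ch{F'}) = \ch{x}\cup\ch{y}.
\]
It remains to record that the unit is preserved: the multiplicative identity of $K(B)$ is the class of the trivial line bundle $\trivial{1}$, whose first Chern class vanishes, so $\ch{\trivial{1}}=e^{0}=1\in H^{0}(B;\QQ)$, the unit of the cohomology ring. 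Together these three points say that $\mathop{\mathrm{ch}}$ preserves addition, multiplication and the unit, hence is a ring homomorphism.

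The genuine content here — compatibility of the Chern character with tensor products — is already packaged into Proposition \ref{prop:chSumsAndProducts} (itself proved via the splitting principle), so I do not expect a serious obstacle. The one point that requires care is \emph{well-definedness}: one must be sure that $\ch{x_E-x_F}=\ch{E}-\ch{F}$ depends only on the class $x_E-x_F\in K(B)$ and not on the chosen representation as a difference. This is exactly what the Grothendieck-group universal property guarantees, and at bottom it rests on the observation made just before the statement, namely $c_k(E\oplus\trivial{n})=c_k(E)$, which makes $\mathop{\mathrm{ch}}$ insensitive to stabilization and hence genuinely defined on stable classes.
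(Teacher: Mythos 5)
Your proposal is correct and follows essentially the same route as the paper, which derives the proposition directly from Proposition \ref{prop:chSumsAndProducts} together with the observation that $c_k(E\oplus\trivial{n})=c_k(E)$ makes $\mathop{\mathrm{ch}}$ well-defined on stable classes. You merely make explicit the routine Grothendieck-group bookkeeping (extension by the universal property, bilinearity on formal differences, preservation of the unit) that the paper leaves implicit.
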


\begin{rem}\label{rem:naturality}
Firstly, let $\ch{B}$ denote the image of $\mathop{\mathrm{ch}}\colon K(B) \to H^*(B;\QQ)$. We call $\ch{B}$ \emph{integral} if $\ch{B} \subset H^*(B;\ZZ)$. Secondly, let $B'$ be another compact CW-complex and $f:B \to B'$ a continuous map, then by the naturality of Chern classes we obtain the naturality of the Chern character, i.e.\ for $y \in K(B')$ we have $f^*(\ch{y}) = \ch{K(f)(y)}$.
\end{rem}

\section{Proof of Theorem \ref{thm:main} }
We start with an observation that $\ch{S^2}$ is integral. For if $E \to S^2$ is a complex vector bundle then $\ch{x_E}= n + c_1(E) \in H^*(S^2;\ZZ)$. Moreover we have
\begin{prp}\label{prop:ch(S2n)isIntegral}
$\ch{S^{2n}}$ is integral.
\end{prp}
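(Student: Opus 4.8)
The plan is to argue by induction on $n$, using Bott periodicity (Theorem \ref{thm:BottPeriodicity}) to compare $S^{2n}$ with the product $S^{2n-2}\times S^2$. The base case $n=1$ is exactly the observation preceding the statement, namely that $\ch{x_E}=\dim E + c_1(E)$ is integral for every bundle $E\to S^2$. So I would assume that $\ch{S^{2n-2}}$ is integral and deduce the same for $S^{2n}$.

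First I would show that $\ch{S^{2n-2}\times S^2}$ is integral. By Theorem \ref{thm:BottPeriodicity} every class in $K(S^{2n-2}\times S^2)$ is a sum of external products $\mu(a\otimes b)$ with $a\in K(S^{2n-2})$ and $b\in K(S^2)$. Since the Chern character is a ring homomorphism (Proposition \ref{prop:chRingHom}) and natural (Remark \ref{rem:naturality}), one has
\[
\mathop{\mathrm{ch}}(\mu(a\otimes b)) = p_{S^{2n-2}}^*(\mathop{\mathrm{ch}}(a))\cup p_{S^2}^*(\mathop{\mathrm{ch}}(b)).
\]
By the inductive hypothesis $\mathop{\mathrm{ch}}(a)$ is integral, by the base case $\mathop{\mathrm{ch}}(b)$ is integral, and pullbacks and cup products of integral cohomology classes are integral; hence every $\mathop{\mathrm{ch}}(\mu(a\otimes b))$, and therefore all of $\ch{S^{2n-2}\times S^2}$, lies in $H^*(S^{2n-2}\times S^2;\ZZ)$.

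Second, I would transport this integrality back to $S^{2n}$ through the collapsing map $q\colon S^{2n-2}\times S^2\to S^{2n-2}\wedge S^2 = S^{2n}$ that crushes the wedge $S^{2n-2}\vee S^2$ to a point. Since the $2n$-cell of the product is the product of the top cells and maps homeomorphically onto the top cell of $S^{2n}$, the induced map $q^*\colon H^{2n}(S^{2n};\ZZ)\to H^{2n}(S^{2n-2}\times S^2;\ZZ)$ is an isomorphism. Now let $y\in K(S^{2n})$ and write $\mathop{\mathrm{ch}}(y)=\dim y + \alpha$ with $\alpha\in H^{2n}(S^{2n};\QQ)$, the only possibly non-integral part (the cohomology of $S^{2n}$ being concentrated in degrees $0$ and $2n$). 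Naturality gives $q^*(\mathop{\mathrm{ch}}(y))=\mathop{\mathrm{ch}}(K(q)(y))$, which is integral by the previous step; since $q^*$ is an integral isomorphism in degree $2n$, it follows that $\alpha$ is integral, completing the induction.

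The main obstacle I expect is the last transfer step: one must make sure that $q^*$ genuinely identifies the integral lattice of $H^{2n}(S^{2n};\ZZ)$ with that of $H^{2n}(S^{2n-2}\times S^2;\ZZ)$ (equivalently, that the generator of the top integral cohomology of the sphere pulls back to the indivisible cross product of the two factor generators), together with keeping careful track of the degree-zero summand so that only the top-degree term $\alpha$ needs to be controlled. Everything else---the multiplicativity and naturality of the Chern character and the surjectivity of $\mu$---is supplied directly by the results already established above.
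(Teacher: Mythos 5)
Your proof is correct and takes essentially the same route as the paper: the paper uses a degree-one map $f\colon S^2\times\cdots\times S^2\to S^{2n}$ together with the iterated Bott periodicity isomorphism $K(S^2\times\cdots\times S^2)\cong K(S^2)\otimes\cdots\otimes K(S^2)$, and your argument is precisely that iteration carried out explicitly, peeling off one $S^2$-factor per inductive step, with your collapsing map $q$ a concrete realization of the paper's degree-one $f$. The lattice-identification point you flag at the end is real but harmless, and corresponds exactly to the paper's observation that $f^*$ is a monomorphism, hence an isomorphism onto $H^{2n}$ in the top degree.
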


\begin{proof}
Let $f \colon S^2 \times\ldots\times S^{2} \to S^{2n}$ be a map of degree $1$, where the cartesian product is taken $n$ times. Then the induced map on cohomology $f^* \colon H^*(S^{2n}
) \to H^*(S^2\times\ldots\times S^2)$ is an monomorphism (if the coefficient group in $H^*$ is not specified, it is assumed to be the rationals) . Using the Künneth-formula we also have an isomorphism of rings $H^*(S^2\times\ldots\times S^2) \cong H^*(S^2)\otimes\ldots\otimes H^*(S^2)$, where the tensor product is taken $n$ times.

Moreover by the Bott periodicity theorem, cf. Theorem \ref{thm:BottPeriodicity}, the map $\mu$ induces inductively an isomorphism
\[
K(S^2\times\ldots\times S^n) \to K(S^2)\otimes\ldots\otimes K(S^2).
\]
and the following diagramm commutes
\begin{center}
\begin{tikzpicture}[scale=2]
\node (A) at (0,1) {$K(S^2\times\ldots\times S^2)$};
\node (B) at (4,1) {$K(S^2)\otimes\ldots\otimes K(S^2)$};
\node (C) at (0,0) {$H^*(S^2\times\ldots\times S^2)$};
\node (D) at (4,0) {$H^*(S^2)\otimes\ldots\otimes H^*(S^2)$.};
\path[->,font=\scriptsize,>=angle 90]
(A) edge node[above]{Bott periodicity} (B)
(A) edge node[right]{$\mathop{\mathrm{ch}}$} (C)
(B) edge node[right]{$\mathop{\mathrm{ch}}\otimes\ldots\otimes \mathop{\mathrm{ch}}$} (D)
(C) edge node[above]{Künneth isomorphism} (D);
\end{tikzpicture}
\end{center}
Since $\ch{S^2}$ is integral, it follows that the image of $\mathop{\mathrm{ch}}\otimes\ldots\otimes \mathop{\mathrm{ch}}$ lies in $H^*(S^2;\ZZ)\otimes\ldots\otimes H^*(S^2;\ZZ)$. Hence $\ch{S^2\times\ldots\times S^2}$ has to be integral.

The naturality of Chern character (cf. Remark \ref{rem:naturality}) implies that the diagramm
\begin{center}
\begin{tikzpicture}[scale=2]
\node (A) at (0,1) {$K(S^{2n})$};
\node (B) at (2,1) {$K(S^2\times\ldots\times S^2)$};
\node (C) at (0,0) {$H^*(S^{2n})$};
\node (D) at (2,0) {$H^*(S^2\times\ldots\times S^2)$.};
\path[->,font=\scriptsize,>=angle 90]
(A) edge node[above]{$K(f)$} (B)
(A) edge node[right]{$\mathop{\mathrm{ch}}$} (C)
(B) edge node[right]{$\mathop{\mathrm{ch}}$} (D)
(C) edge node[above]{$f^*$} (D);
\end{tikzpicture}
\end{center}
also commutes. From above we have that $\ch{S^2\times\ldots\times S^2}$ is integral and since $f^*$ is an isomorphism we conclude that $\ch{S^{2n}}$ has to be integral too.
\end{proof}
Suppose now, there is a complex vector bundle $E$ of rank $n$ such that $E_\RR = TS^{2n}$. Then, due to the cohomology of $S^{2n}$ all Chern classes have to be zero except possibly $c_n(E)$. From the recursive formula for Newton polynoms, cf.\cite[p.195]{MiSChC}, it follows that only $\nu_n$ is not zero and is given by $\nu_n = \pm n c_n(E)$. Hence $\ch{x_E}=n \pm \frac{c_n(E)}{((n-1)!)}$ and therefore 
\[
\frac{c_n(E)}{(n-1)!} \in H^{2n}(S^{2n};\ZZ)\cong\ZZ.
\]
Since $\langle c_n(E),[S^{2n}]\rangle = \langle e(E_\RR)),[S^{2n}]\rangle =  \chi(S^{2n})= 2$, we conclude that $(n-1)!$ has to divide $2$, hence $n\leq 3$.

This completes the proof of Therem \ref{thm:main}.

\begin{rem}
The proof of Theorem \ref{thm:main} uses only the homotopy type of $S^{2n}$. By the generalized Poincare conjecture every manifold homotopy equivalent to $S^{2n}$ is homeomorphic to $S^{2n}$. But since Milnor \cite{MilMHS} it is known that a manifold can be homeomorphic but not diffeomorphic to the standard sphere $S^{2n}\subset\RR^{2n+1}$. These manifolds are called \emph{exotic spheres}. Thus Theorem \ref{thm:main} is also true if $S^{2n}$ is replaced by an exotic sphere.
\end{rem}

\end{document}